\documentclass[12pt]{amsart}



\usepackage{amssymb}
\usepackage{enumitem}

\usepackage{graphicx}

\makeatletter
\@namedef{subjclassname@2010}{%
  \textup{2010} Mathematics Subject Classification}
\makeatother

\usepackage[T1]{fontenc}

\newtheorem{thm}[equation]{Theorem}

\newtheorem*{conj*}{Conjecture}
\newtheorem*{thm*}{Theorem}
\newtheorem{rem}[equation]{Remark}

\newtheorem{lem}[equation]{Lemma}
%
%
%
%
%


\numberwithin{equation}{section}

\def\f{\frac}
\def\l{\left}
\def\r{\right}
\def\be{\begin{equation}}
\def\ee{\end{equation}}
\def\ba{\begin{align*}}
\def\eal{\end{align*}}
\def\inter{\intertext}



\begin{document}



\title[Finiteness of trivial solutions]{Finiteness of trivial solutions of factorial products yielding a factorial over number fields}

\author[Wataru Takeda]{Wataru Takeda}
\address{Department of Mathematics, Nagoya University\\Chikusa-ku, Nagoya 464-8602, Japan}
\email{d18002r@math.nagoya-u.ac.jp}

\date{}

\begin{abstract}
We consider a Bertrand type estimate for primes splitting completely. As one of its applications, we show the finiteness of trivial solutions of Diophantine equation about the factorial function over number fields except for the case the rational number field.\end{abstract}

\subjclass[2010]{Primary 11D45; Secondary 11D57, 11D72}

\keywords{Bertrand type estimate, Diophantine equation, finiteness theorem, number field}

\maketitle

\section{Introduction}
Let $K$ be a number field and let $\mathcal{O}_K$ be its ring of integers. 
We consider the factorial function generalized to number fields. This function $\Pi_K(x)$ is defined as
\[ \Pi_K(x)=\prod_{\mathfrak{Na}\le x}\mathfrak{Na}=\prod_{n\le x}n^{a_K(n)},\]
where {$\mathfrak{Na}$ is the norm of $\mathfrak a$ and} $a_K(n)$ is the number of ideals with $\mathfrak{Na}=n$. It is well known that $a_K(n)$ has the multiplicative property
\[a_K(mn)=a_K(m)a_K(n)\hspace{5mm}\text{ if }\gcd(m,n)=1.\] 
In the following, we will use abbreviation $a(n)$ for $a_K(n)$. If the product is empty then we assign it the value $1$.
{As a generalization,} we consider the equation \begin{equation} \label{deq} \Pi_K(l_1)\cdots\Pi_K(l_{m-1})=\Pi_K(l_m)\end{equation} for $2\le l_1\le\cdots\le l_{m-1}<l_m$. 

In the case $K=\mathbf Q$, this equation has infinitely many solutions but most of them satisfy $l_m-l_{m-1}=1$ and they are called trivial solutions. In this case, it is known that if the ABC conjecture holds, then there are only finitely many non-trivial solutions of (\ref{deq}) \cite{lu}. The $3$-tuple $(6,7,10)$ is one of such non-trivial solutions, but we do not know others. In \cite{hps}, they show that non-trivial $3$-tuple solutions of (\ref{deq}) other than $(l_1,l_2,l_3) = (6, 7, 10)$ satisfy $l_3< 5(l_2-l_1)$ and if $l_2-l_1\le 10^6$ then the only non-trivial solution to (\ref{deq}) is $(6, 7, 10)$. 

In {the} general case, a solution $(l_1,\ldots,l_m)$ is called trivial if there exists no ideals with $l_{m-1}<\mathfrak{Na}< l_m$. For example, when $K=\mathbf Q(\sqrt{-3})$, we describe the splitting of prime ideals in $\mathcal O_K=\mathbf Z\l[\f{1+\sqrt{-3}}2\r]$ as follows:
\[\begin{array}{l|l|l}
\text{prime $p$ in $\mathbf Z$}&\text{how to split in }\mathcal O_K&\text{$a(p^k)$ for $k\ge1$}\\
\hline
p\equiv1\mod 3&(p)\text{ splits completely in $\mathcal O_K$.}&a(p^k)=k+1\\
p\equiv2\mod 3&(p)\text{ is also a prime ideal of $\mathcal O_K$.}&a(p^{2k-1})=0, a(p^{2k})=1\\
p=3&(3)\text{ ramifies in $\mathcal O_K$.}&a(p^k)=1
\end{array}
\]
From the multiplicative property of $a(n)$ we can calculate $a(n)$ for all $n$. One can check that two $3$-tuples $(4,9,12)$ and $(12,247,252)$ are trivial solutions while the $3$-tuple $(16,111,117)$ is a non-trivial solution. {We do not know} whether or not there exist infinitely many solutions of (\ref{deq}). 

In this paper, we show the finiteness of trivial solutions.
\begin{thm}
\label{mtm}
For any number field $K\not=\mathbf Q$, there exist only finitely many trivial solutions of {the} Diophantine equation (\ref{deq}).
\end{thm}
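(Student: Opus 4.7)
The plan is to use the triviality hypothesis to reduce~(\ref{deq}) to
\[ \Pi_K(l_1)\cdots\Pi_K(l_{m-2}) = l_m^{a(l_m)}, \]
since the only ideals with norm in $(l_{m-1}, l_m]$ are those of norm exactly $l_m$, giving $\Pi_K(l_m) = \Pi_K(l_{m-1})\,l_m^{a(l_m)}$. Any rational prime $p$ splitting completely in $K$ with $p \le l_{m-2}$ has a prime ideal above it of norm $p$, so $p \mid \Pi_K(l_{m-2})$ divides the left-hand side, and therefore $p \mid l_m$. Thus every such split prime divides $l_m$.

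The key step is to exploit multiplicativity of $a_K$. Write $n = [K:\mathbf{Q}] \ge 2$. For a completely split prime $p$ with $v_p(l_m) \ge 1$, one has $a_K(p^{v_p(l_m)}) = \binom{v_p(l_m) + n - 1}{n - 1} \ge n$. Combined with the previous divisibility observation, this yields
\[ a(l_m) \ge n^{S(l_{m-2})}, \]
where $S(x)$ denotes the number of rational primes $\le x$ splitting completely in $K$. The Bertrand-type estimate for split primes (the main preparatory result of the paper, and in any case implied by Chebotarev plus the prime number theorem) ensures $S(x) \to \infty$ as $x \to \infty$. Taking logarithms of the reduced equation and using the elementary bound $\log \Pi_K(x) = O(x \log x)$,
\[ \log l_m \;\le\; \frac{(m-2)\log\Pi_K(l_{m-2})}{a(l_m)} \;\le\; \frac{C\,m\,l_{m-2}\log l_{m-2}}{n^{S(l_{m-2})}}. \]
Because $n \ge 2$, the right-hand side tends to $0$ as $l_{m-2} \to \infty$, contradicting $l_m > l_{m-1} \ge l_{m-2}$. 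Hence $l_{m-2}$ is bounded; the reduced equation then bounds $l_m$, and therefore $l_{m-1}$ and every $l_i$. A uniform bound on $m$ follows from $\prod_i \Pi_K(l_i) \ge 2^{m-1}$ after discarding any trivial factors $\Pi_K(l_i) = 1$, yielding only finitely many trivial solutions.

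The principal obstacle is the Bertrand-type estimate for completely split primes itself; once it is granted, the chain of deductions above is quite natural. It is worth noting that the hypothesis $K \ne \mathbf{Q}$ enters only through $n \ge 2$: without this, the lower bound $n^{S(\cdot)}$ collapses to $1$ and the whole argument breaks down, consistent with the known infinitude of trivial solutions over $\mathbf{Q}$.
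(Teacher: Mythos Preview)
Your argument shares the paper's central observation---that every completely split prime $p \le l_{m-2}$ must divide $l_m$, forcing $a(l_m) \ge n^{S(l_{m-2})}$---but derives the contradiction by a different mechanism. The paper isolates a single split prime $q$ with $l_{m-2}/p_1 < q \le l_{m-2}$ (and this is precisely where the Bertrand-type Theorem~\ref{pp} enters, applied with $A = p_1$) and then compares $q$-adic valuations: on the right $v_q(l_m^{a(l_m)}) \ge n^{|P_{s.c.}(q)|}$, while on the left $v_q\bigl(\prod_{i\le m-2}\Pi_K(l_i)\bigr) \le n(m-2)$, because the only ideals of norm at most $l_{m-2} < p_1 q$ whose norm is a multiple of $q$ have norm exactly $q$. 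You instead bound the logarithm of both sides, which only requires that $n^{S(x)}$ eventually dominate $x\log x$; this follows already from the Chebotarev density theorem and does not need Theorem~\ref{pp} at all. So your route is more elementary and sidesteps the paper's main technical input, though the valuation argument connects more directly to the explicit constants worked out in Section~4. Two minor points: you invoke only $S(x)\to\infty$, but the displayed inequality actually needs $S(x)/\log x \to \infty$; and your closing remark about bounding $m$ is unnecessary since $m$ is fixed in~(\ref{deq})---and in any event the step ``the reduced equation then bounds $l_m$'' tacitly assumes $a(l_m) \ge 1$, a point the paper's proof likewise glosses over.
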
 
As we remarked above, there are infinitely many trivial solutions in the case $K=\mathbf Q$. On the other hand, this theorem asserts that there exist only finitely many trivial solutions for any number fields $K\not=\mathbf Q$. That is, there is an essential difference between the case $K=\mathbf Q$ and the general case.

\section{Auxiliary lemmas}
In this section we show and introduce some auxiliary lemmas to prove Theorem \ref{mtm}. The first lemma gives a necessary and sufficient condition for the existence of trivial solution.
\begin{lem}
\label{equi}
The following two statements are equivalent.
\begin{enumerate}
\item {The} $m$-tuple $(l_1,\ldots,l_m)$ is a trivial solution.
\item {Letting $l_m=\prod_p p^{r_p}$, we have} \[\Pi_K(l_1)\cdots\Pi_K(l_{m-2})=l_m^{a(l_m)}=\l(\prod_{p}p^{r_p}\r)^{\prod_{p}a(p^{r_p})}.\]
\end{enumerate}
\end{lem}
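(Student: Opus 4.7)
The plan is to prove the equivalence by manipulating the product $\Pi_K(l_m)$ using the natural partition of its factors by norm size. I would first record the identity
\[
\Pi_K(l_m) = \Pi_K(l_{m-1}) \cdot \prod_{l_{m-1} < \mathfrak{Na} < l_m} \mathfrak{Na} \cdot l_m^{a(l_m)},
\]
obtained by separating ideals $\mathfrak{a}$ according to whether $\mathfrak{Na} \le l_{m-1}$, $l_{m-1} < \mathfrak{Na} < l_m$, or $\mathfrak{Na} = l_m$; the last range contributes $l_m$ exactly $a(l_m)$ times. The triviality condition in (1) is then precisely the statement that the middle factor is an empty product and equals $1$.

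For (1) $\Rightarrow$ (2), I would substitute the simplified form $\Pi_K(l_m) = \Pi_K(l_{m-1}) \cdot l_m^{a(l_m)}$ into (\ref{deq}) and cancel $\Pi_K(l_{m-1})$ on both sides; the factorized form $l_m^{a(l_m)} = \bigl(\prod_p p^{r_p}\bigr)^{\prod_p a(p^{r_p})}$ then follows from writing $l_m$ as a product of prime powers and invoking the multiplicativity of $a_K$ recorded in the introduction. For (2) $\Rightarrow$ (1), I would multiply (2) by $\Pi_K(l_{m-1})$ to obtain $\Pi_K(l_1) \cdots \Pi_K(l_{m-1}) = \Pi_K(l_{m-1}) \cdot l_m^{a(l_m)}$, and compare with the decomposition of $\Pi_K(l_m)$ above: the two sides of (\ref{deq}) agree exactly when the middle factor is $1$, i.e., when no ideal has norm in the open interval $(l_{m-1}, l_m)$, which is the triviality condition.

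Since the argument is a short bookkeeping exercise using only the defining product formula for $\Pi_K$ and the multiplicativity of $a_K$, I do not anticipate a substantive obstacle; the only point requiring a bit of care is the clean separation of the ranges $\mathfrak{Na} = l_m$ and $l_{m-1} < \mathfrak{Na} < l_m$ when splitting the product, since both sit above $l_{m-1}$ and must be kept distinct so that the $l_m^{a(l_m)}$ factor is isolated.
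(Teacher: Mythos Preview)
Your direction $(1)\Rightarrow(2)$ is correct and is essentially the paper's argument.

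The direction $(2)\Rightarrow(1)$, however, has a genuine gap. Notice that condition~(2) does not mention $l_{m-1}$ at all, so it cannot force any particular choice of $l_{m-1}$ to yield a trivial solution. What you actually establish is that, assuming~(2), the equation~(\ref{deq}) holds \emph{if and only if} the middle product $\prod_{l_{m-1}<\mathfrak{Na}<l_m}\mathfrak{Na}$ equals~$1$; but you have not shown that either of these two equivalent statements is true for the given $l_{m-1}$. Indeed, with $l_{m-1}$ regarded as fixed the implication is false: in the paper's example $K=\mathbf Q(\sqrt{-3})$ one has $\Pi_K(4)=12=12^{a(12)}$, so $(l_1,l_3)=(4,12)$ satisfies~(2), yet the tuple $(4,7,12)$ is not a solution of~(\ref{deq}) at all (there is an ideal of norm $9$ in $(7,12)$).

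The paper resolves this by reading the equivalence existentially in $l_{m-1}$: given~(2), it \emph{defines} $l_{m-1}=\max\bigl(\{\mathfrak{Na}\mid \mathfrak a\text{ ideal}\}\cap[l_{m-2},l_m)\bigr)$, so that by construction the open interval $(l_{m-1},l_m)$ contains no ideal norm, and then checks that~(\ref{deq}) holds via $\Pi_K(l_m)=l_m^{a(l_m)}\Pi_K(l_{m-1})$. Your decomposition identity is exactly what is needed for this last step; the missing ingredient is simply the choice of $l_{m-1}$.
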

\begin{proof}
Let $(l_1,\ldots,l_m)$ be a trivial solution. Then the equation $\Pi_K(l_1)\cdots\Pi_K(l_{m-1})=\Pi_K(l_m)$ can be rewritten as \[\Pi_K(l_1)\cdots\Pi_K(l_{m-2})=\prod_{l_{m-1}<\mathfrak{Na}\le l_m }\mathfrak{Na}=l_m^{a(l_m)}.\]
The second equality follows since there exists no ideal {with norm} in the interval $(l_{m-1},l_m)$. 

Conversely, when $\Pi_K(l_1)\cdots\Pi_K(l_{m-2})=l_m^{a(l_m)}$,
we define $l_{m-1}=\max\{\mathfrak{Na}~|~\mathfrak a: \text{ideal}\}\cap[l_{m-2},l_m)$. Then 
\ba
\Pi_K(l_m)&=l_m^{a(l_m)}\Pi_K(l_{m-1})\\
&=\Pi_K(l_1)\cdots\Pi_K(l_{m-1}).\\
\end{align*}
Therefore, $(l_1,\ldots,l_m)$ be a solution. From the definition of $l_{m-1}$, there exists no ideal with their ideal norm being in the interval $(l_{m-1},l_m)$. Hence $(l_1,\ldots,l_m)$ is a trivial solution.
This proves this equivalence.
\end{proof}
\begin{rem}
{In the} case $K=\mathbf Q$, we know $a(n)=1$ for all $n$. This asserts that $(l_1,\ldots,l_m)$ is a trivial solution if and only if $l_m-l_{m-1}=1$. This does not contradict the definition of trivial solution.
\end{rem}

In 2017, Hulse and Murty gave {a} generalization of Bertrand's postulate, or Chebyshev's theorem, to number fields \cite{pp}. The original Bertrand postulate states that for any $x > 1$ there exists a prime number in the interval {$(x, 2x)$}. This is a result weaker than the prime number theorem but we can prove this without information about the zeros of the zeta function.

In the followings, we consider a Bertrand type estimate for primes splitting completely by following the way of \cite{pp}.
In \cite{pp}, they use the following effective version of prime ideal theorem given by Lagarias and Odlyzko.
Let $L/K$ be a Galois extension. For each conjugacy class $C$ of $G$, we define $\pi_{C}(x)$ by 
\[\pi_{C}(x)=\l|\{\mathfrak p \subset \mathcal O_K~|~ \mathfrak p \text{ is unramified in } L, [(\mathfrak p, L/K)]=C, \mathfrak{Np}\le x \}\r|,\]
where $[(\mathfrak p, L/K)]$ is the conjugacy class of Frobenius map corresponding to $\mathfrak p$. 
\begin{lem}[Theorem 1.3. of \cite{lo77}]
\label{hh}
Let $L/K$ be a Galois extension of number fields with $[L:\mathbf Q]=n$ and let $D_L$ be the absolute value of the discriminant of $L$. Then there exist effectively computable positive constants $c_1$ and $c_2$,
such that if $x>\exp\l(10n(\log D_L)^2\r)$ then
\[\l|\pi_C(x)- \f{|C|}{|G|}Li(x) + \f{|C|}{|G|}(-1)^{\varepsilon_L} Li(x^\beta)\r|\le c_1x \exp\l(-c_2\sqrt{\f{\log x}n}\r),\]
where $Li(x^\beta)$ only occurs if there exists an exceptional real zero $\beta$ of $\zeta_L(s)$ such that $1-(4 \log D_L)^{-1} <\beta< 1$. Also $\varepsilon_L = 0$ or $1$ depending on $L$.
\end{lem}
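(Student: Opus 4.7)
The plan is to follow the analytic framework of Lagarias and Odlyzko, which weaves together Brauer induction, the Weil explicit formula, and Stark's effective zero-free region. By orthogonality of irreducible characters of $G$, one has
\[\pi_C(x)=\frac{|C|}{|G|}\sum_\chi \overline{\chi(g_C)}\,\pi(x,\chi,L/K),\]
where $g_C\in C$ is a fixed representative and $\pi(x,\chi,L/K)$ is the Artin $L$-function analogue of $\pi(x)$. Brauer's induction theorem writes each irreducible $\chi$ as a $\mathbb{Z}$-linear combination of monomial characters, so every Artin $L$-function appearing is a product of integer powers of abelian Hecke $L$-functions, each entire outside of the trivial character and obeying a known functional equation. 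The task thereby reduces to a sufficiently effective prime ideal theorem for each Hecke $L$-function.

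I would then apply the Weil--Riemann explicit formula to a smooth approximation of $\mathbf{1}_{[1,x]}$. This yields, for every $\chi$, a main term $\delta_{\chi=1}\,Li(x)$ plus a sum over the nontrivial zeros $\rho$ of $L(s,\chi,L/K)$, together with archimedean and conductor contributions that are absorbed by the hypothesis $x>\exp(10n(\log D_L)^2)$. Combining across $\chi$ identifies the main term as $(|C|/|G|)Li(x)$ and expresses the error as a controlled sum over zeros of $\zeta_L(s)=\prod_\chi L(s,\chi,L/K)^{\chi(1)}$, whose vertical density obeys the Riemann--von Mangoldt estimate $N(T,\chi)\ll T\log(D_L T^n)$.

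To shrink the zero sum I would invoke Stark's effective zero-free region, which guarantees that $\zeta_L$ has no zero with $\sigma\ge 1-c/\log(D_L(|t|+2)^n)$ except possibly a single real Siegel zero $\beta\in(1-(4\log D_L)^{-1},1)$. Shifting the contour of integration to this region and optimising in $x$ produces the stated bound $c_1 x\exp(-c_2\sqrt{(\log x)/n})$ for the contribution of all non-exceptional zeros; the threshold on $x$ is precisely what places the contour safely left of every ordinary zero. The exceptional zero, if present, must be tracked explicitly, and its contribution is exactly the term $-(|C|/|G|)(-1)^{\varepsilon_L}Li(x^\beta)$, the sign $\varepsilon_L$ being determined by parity considerations attached to $L$.

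The principal obstacle is effectivity. Siegel's theorem on Landau--Siegel zeros is ineffective, so $\beta$ cannot simply be discarded; the remedy is a quantitative Deuring--Heilbronn phenomenon, which trades the proximity of $\beta$ to $1$ against a widened zero-free region elsewhere, and is what ultimately allows the constants $c_1,c_2$ to be written down in principle. The other bookkeeping difficulty is controlling the conductors of the Hecke $L$-functions that arise from Brauer induction in terms of $D_L$, which one handles through the conductor--discriminant formula.
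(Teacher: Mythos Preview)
The paper does not supply a proof of this lemma at all: it is quoted verbatim as Theorem~1.3 of Lagarias--Odlyzko \cite{lo77} and used as a black box in the proof of Theorem~\ref{pp}. So there is nothing in the paper to compare your argument against; your sketch is an outline of the original Lagarias--Odlyzko proof rather than of anything the present author does.

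As a sketch of the Lagarias--Odlyzko argument your outline is broadly on target (orthogonality to reduce to $\psi(x,\chi)$, explicit formula, zero-free region with a possible real exception, Riemann--von Mangoldt zero-counting to control the zero sum), but one point is slightly off. You invoke the Deuring--Heilbronn phenomenon to secure effectivity, yet in the formulation being quoted the exceptional term $Li(x^\beta)$ is kept explicitly in the main term rather than absorbed into the error. Precisely because $\beta$ is displayed, one does \emph{not} need any lower bound on $1-\beta$, and hence no Siegel-type ineffectivity enters; the constants $c_1,c_2$ come purely from the classical zero-free region for the non-exceptional zeros and from the zero-density estimate. Deuring--Heilbronn (or the ``repulsion'' method) is what Lagarias--Odlyzko use for their \emph{other} theorem, in which $\beta$ is absorbed and the error term degrades accordingly. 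Also, Brauer induction is not essential for this version: since only the zeros of $\zeta_L=\prod_\chi L(s,\chi)^{\chi(1)}$ intervene, one can work directly with $\zeta_L$ and bypass analytic continuation issues for individual Artin $L$-functions.
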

If $\mathfrak p$ splits completely in $L$ then the Frobenius map $(\mathfrak p, L/K)$ is trivial and $|[(\mathfrak p, L/K)]|=1$. From the definition of $\varepsilon_L$ in \cite{lo77}, we obtain $\varepsilon_L=0$. The next theorem gives a Bertrand type estimate for primes splitting completely.
\begin{thm}
\label{pp}
Let $K$ be a number field and $K^{gal}$ be the Galois closure of $K/\mathbf Q$ with $[K^{gal}:\mathbf Q]=k$ and let $D$ be the absolute value of the discriminant of $K^{gal}$.
For any $A > 1$ there exists an effectively computable constant $c(A) > 0$ depending only on $A$ such that for
$x > \exp(c(A)k(\log D)^2)$ there is a prime splitting completely with
$x<p\le Ax$.
\end{thm}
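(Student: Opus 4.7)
The plan is to apply Lemma \ref{hh} to the extension $L = K^{gal}$ over $\mathbf{Q}$ with the trivial conjugacy class $C = \{e\}$. A rational prime $p$ splits completely in $K$ exactly when it splits completely in $K^{gal}$, which in turn is equivalent to its Frobenius being trivial. Under this choice $|C|/|G| = 1/k$, and $\varepsilon_L = 0$ by the observation preceding the theorem. Writing $\pi_{sc}(x)$ for the number of rational primes $p \le x$ that split completely in $K$, Lemma \ref{hh} then yields, for $x > \exp(10k(\log D)^2)$,
\[
\pi_{sc}(x) \;=\; \frac{1}{k}\bigl(Li(x) - Li(x^\beta)\bigr) + O\!\left(x\exp\!\left(-c_2\sqrt{\log x/k}\right)\right),
\]
with the $Li(x^\beta)$ term appearing only if an exceptional zero exists.

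The theorem then reduces to showing $\pi_{sc}(Ax) - \pi_{sc}(x) > 0$ above the claimed threshold. Differencing at $Ax$ and $x$ and using the standard asymptotic $Li(Ax) - Li(x) \sim (A-1)x/\log x$ together with the crude estimate $Li((Ax)^\beta) - Li(x^\beta) \ll (A-1)x^\beta/\log x$ (valid since $A^\beta \le A$), the main term contributes at least $\gg_A x/(k \log x)$, while the combined error is $O(Ax\exp(-c_2\sqrt{\log x/k}))$. It therefore suffices to choose $c(A)$ large enough that, whenever $\log x \ge c(A)k(\log D)^2$,
\[
\exp\!\left(c_2\sqrt{\log x/k}\right) \;\gg_A\; k\log x.
\]
This is a routine logarithmic estimate: on taking logs, the condition becomes $c_2\sqrt{\log x/k} \gtrsim \log(k\log x) + O_A(1)$, and since $\sqrt{\log x/k}\ge \sqrt{c(A)}\log D$ at the threshold, enlarging $c(A)$ in terms of $A$ and the absolute constants $c_1,c_2$ of Lemma \ref{hh} completes this step.

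The principal obstacle is the exceptional-zero contribution. To render it harmless one must control $x^{\beta-1}$, i.e., ensure that $(1-\beta)\log x$ grows uniformly in $D$. Lemma \ref{hh} only supplies the upper bound $1-\beta<(4\log D)^{-1}$, so I would supplement it with an effective Siegel--Stark-type lower bound on $1-\beta$ in terms of $D$; combined with the superpolynomial growth of the threshold $\exp(c(A)k(\log D)^2)$ in $\log D$, this makes the exceptional contribution asymptotically negligible relative to the principal term. Because both Lemma \ref{hh} and the Siegel--Stark bound are effective, the resulting constant $c(A)$ depends only on $A$, yielding the desired Bertrand-type estimate.
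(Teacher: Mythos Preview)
Your setup is right and matches the paper: reduce to $K^{gal}/\mathbf Q$, take $C=\{e\}$, apply Lemma~\ref{hh}, and aim to show $\pi_{sc}(Ax)-\pi_{sc}(x)>0$. The comparison of the Lagarias--Odlyzko error term against a main term of size $x/(k\log x)$ is also fine, and handling it via $\sqrt{\log x/k}\ge \sqrt{c(A)}\log D$ is exactly what the paper does (with the Minkowski bound supplying $k\ll \log D$ so that $c(A)$ can be taken independent of $K$).

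The gap is in your treatment of the exceptional zero. You propose to make the term $Li((Ax)^\beta)-Li(x^\beta)$ \emph{negligible} compared with $Li(Ax)-Li(x)$ by forcing $(1-\beta)\log x$ to be large. But Stark's effective bound only gives $1-\beta\gg D^{-1/k}$, so at the threshold $\log x=c(A)k(\log D)^2$ one has
\[
(1-\beta)\log x \;\ge\; c_3\,c(A)\,k(\log D)^2 D^{-1/k},
\]
which tends to $0$ as $D\to\infty$ for fixed $k$ (e.g.\ $k=2$). Thus $x^{\beta-1}$ is \emph{not} uniformly bounded away from $1$, and the exceptional contribution can be as large as a fixed proportion of the principal term; your ``superpolynomial growth of the threshold in $\log D$'' does not beat the exponential decay $D^{-1/k}$ in the zero bound. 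No known effective lower bound on $1-\beta$ for general Galois $L/\mathbf Q$ is strong enough to rescue this route.

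The paper avoids this by \emph{not} trying to make the exceptional term small. Instead it groups $Li(y)-Li(y^\beta)$ as a single quantity and compares its values at $y=Ax$ and $y=x$: after integration by parts one is led to
\[
A\,\frac{\beta_0-(Ax)^{\beta_0-1}}{\beta_0-x^{\beta_0-1}}\cdot\frac{\log x}{\log Ax}\;>\;1+\frac{2A\beta_0 kc_1\log x}{\beta_0-x^{\beta_0-1}}\exp\!\Bigl(-c_2\sqrt{\tfrac{\log x}{k}}\Bigr),
\]
whose left side tends to $A$ and whose right side tends to $1$. The point is that even when $x^{\beta_0-1}$ is close to $1$, the denominator $\beta_0-x^{\beta_0-1}$ is still of order $c_3 c(A)k(\log D)^2 D^{-1/k}$, and the resulting factor $D^{1/k}\le D^{1/2}$ is absorbed by $D^{-c_2\sqrt{c(A)}}$ for large $c(A)$. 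So the fix is to rearrange the comparison rather than to seek a stronger zero-free region.
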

\begin{proof}
It is well known that a prime $p$ splits completely in $K$ if and only if it splits completely in the smallest Galois extension $K^{gal}$ of $\mathbf Q$ containing $K$. Without loss of generality, we assume that $K/\mathbf Q$ is a Galois extension. Let $\pi_{s.c.}(x)$ be the number of primes $p\le x$ splitting completely in $K$. 
From Lemma \ref{hh} and the above remark, we get\ba
&\pi_{s.c.}(Ax)-\pi_{s.c.}(x)\\
&>\f1{k}\l(Li(Ax)-Li(x)\r)-\f{1}{k}\l(Li(\l(Ax\r)^\beta)-Li(x^\beta)\r)-2Ac_1x\exp\l(-c_2\sqrt{\f{\log x}{k}}\r).\\
\end{align*}
It suffices to show that the right hand side is positive for $x>\exp(c(A)k(\log D)^2)$.

Stark \cite{st74} showed that if $K/\mathbf Q$ is a Galois extension and $\beta$ exists then 
\be\label{star}
1-\f1{4\log D}<\beta<1-\f{c_3}{D^{\f1k}},
\ee
where $c_3$ is an effectively computable positive constant. One can check that $Li\l((Ax)^\beta\r)-Li(x^\beta)$ is a monotonically increasing function in $\beta$ for fixed $x>\exp\l(10k(\log D)^2\r)$ and $A>1$, so we put $\beta_0=1-c_3D^{-\f1k}$. By integration by parts we find that it suffices to show 
\ba
&\f{Ax}{\log Ax}-\f{(Ax)^{\beta_0}}{\beta_0\log Ax}+\int_{(Ax)^{\beta_0}}^{Ax}\f{dt}{(\log t)^2}\\
&>\f x{\log x}-\f {x^{\beta_0}}{\beta_0\log x}+\int_{x^{\beta_0}}^{x}\f{dt}{(\log t)^2}+2Akc_1x\exp\l(-c_2\sqrt{\f{\log x}{k}}\r) 
\end{align*}
for $x>\exp\l(10k(\log D)^2\r)$ and $A>1$. 
The function $\int_{x^{\beta_0}}^{x}\f{dt}{(\log t)^2}$ also increases as $x$ increases for $x>\exp(10k(\log D)^2)$, so our goal is to show
\begin{align}
\label{axx}
\f{Ax\beta_0-(Ax)^{\beta_0}}{\beta_0\log Ax}&>\f {x\beta_0-x^{\beta_0}}{\beta_0\log x}+2Akc_1x\exp\l(-c_2\sqrt{\f{\log x}{k}}\r).\nonumber\\
\inter{{Dividing} by $\f {x\beta_0-x^{\beta_0}}{\beta_0\log x}$ this is equivalent to}
A\f{\beta_0-(Ax)^{\beta_0-1}}{\beta_0-x^{\beta_0-1}}\f{\log x}{\log Ax}&>1+\f{2A\beta_0 kc_1\log x}{\beta_0-x^{\beta_0-1}}\exp\l(-c_2\sqrt{\f{\log x}{k}}\r).
\end{align}
Now we {put} $x=\exp\l(c(A)k(\log D)^2\r)$. Then the right hand side is equal to
\be\label{right}\f{2\beta_0 Ak^2c_1c(A)(\log D)^2D^{-c_2\sqrt{c(A)}}}{\beta_0-x^{\beta_0-1}}.\ee
The denominator of (\ref{right}) is monotonically increasing and positive for $x>10$. Moreover the Minkowski bound $\f kD\le\l(\f4\pi\r)^k\f {(k!)^2}{k^{2k-1}}$ \cite{La90} leads {to}
\be\label{min}
\f k{D}\le\l(\f 4\pi\r)^k\f{(k!)^2}{k^{2k-1}}\le\f{8}{\pi^2}.\ee
The numerator of (\ref{right}) is equal to
\[
2\beta_0 Ac_1\f{c(A)}{D^{c_2\sqrt{c(A)}-3}}\l(\f{k}{D}\r)^2\f{(\log D)^2}{D}.\]
For $c(A)>4c_2^{-2}$ this function is monotonically decreasing and from (\ref{min}) we can choose $c(A)$ independent of $K$.
Therefore, {both} sides of (\ref{axx}) is decreasing for $x>\exp\l(c(A)k(\log D)^2\r)$. Also the left hand side converges to $A$ and the right {converges to} $1$ as $x$ tends to infinity. 
Thus there exists $c(A)$, independent of $K$, such that if $x>\exp\l(c(A)k(\log D)^2\r)$, then inequality (\ref{axx}) holds, that is, $\pi_{s.c.}(Ax)-\pi_{s.c.}(x)>0$.
This proves the theorem.
\end{proof}

\section{Proof of the main theorem}
In this section, we show Theorem \ref{mtm}. We {recall} the main theorem. 

\setcounter{section}{1}
\setcounter{equation}{1}
\begin{thm}
\label{main}
For any number field $K\not=\mathbf Q$, there exists only finitely many trivial solutions of Diophantine equation (\ref{deq}).
\end{thm}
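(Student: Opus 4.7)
The plan is to apply Lemma \ref{equi} to rewrite a trivial solution as
\[\Pi_K(l_1)\cdots\Pi_K(l_{m-2})=l_m^{a(l_m)}\]
and then exploit the tension between the two sides: the left-hand side contains every ideal norm up to $l_{m-2}$ as a factor, and in particular every rational prime $p\leq l_{m-2}$ that arises as a norm of a degree-one prime ideal (e.g.\ a rational prime splitting completely in $K$) divides some $\Pi_K(l_i)$, while the right-hand side is a prime power whose prime support is exactly that of $l_m$. It follows that every rational prime $p\leq l_{m-2}$ splitting completely in $K$ must divide $l_m$.

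Writing $k=[K^{gal}:\mathbf{Q}]$ and using Theorem \ref{pp} (equivalently, the Chebotarev input on which it rests), this first observation yields
\[\log l_m\;\geq\;\sum_{\substack{p\leq l_{m-2}\\ p\text{ splits completely}}}\log p\;\sim\;\frac{l_{m-2}}{k}\]
as $l_{m-2}\to\infty$. On the other hand, taking logarithms of the displayed identity and invoking the prime ideal theorem for $K$ gives
\[a(l_m)\log l_m\;=\;\sum_{i=1}^{m-2}\theta_K(l_i)\;\leq\;(m-2)\,\theta_K(l_{m-2})\;\sim\;(m-2)\,l_{m-2},\]
so comparing these two inequalities bounds $a(l_m)$ from above in terms of $m$ and $k$.

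The complementary lower bound on $a(l_m)$ comes from its multiplicative structure: for each split prime $p\mid l_m$ one has $a(p^{v_p(l_m)})\geq n:=[K:\mathbf{Q}]$, so $a(l_m)\geq n^{\omega_s(l_m)}$ where $\omega_s(l_m)$ denotes the number of distinct split prime divisors of $l_m$. By the first observation $\omega_s(l_m)\geq\pi_{\mathrm{sc}}(l_{m-2})\sim l_{m-2}/(k\log l_{m-2})$, so matching the two bounds forces $l_{m-2}$ to be bounded. Cascading this bound back through the equation gives effective upper bounds first on $a(l_m)$, then on $l_m$, and finally on $l_{m-1}$ via Theorem \ref{pp}; since $l_i\leq l_{m-2}$ for $i\leq m-2$, all components of the tuple lie in a finite set.

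The hardest part will be closing this final cascade: the quantities $l_{m-2}$, $a(l_m)$, $l_m$ and $m$ are interlocked through the equation and the asymptotic inputs, so the inequalities must be propagated in the right order and with sharp enough constants to avoid circular reasoning. A secondary technical nuisance is that Theorem \ref{pp} is effective only beyond an explicit threshold in the discriminant of $K^{gal}$, so configurations below that threshold have to be handled by a separate absolute bound and absorbed into a finite exceptional list at the end.
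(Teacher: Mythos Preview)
Your proposal shares the paper's central observation: every rational prime $p\le l_{m-2}$ that splits completely in $K$ must divide $l_m$, and since $a(p^r)\ge n$ for such $p$ and $r\ge 1$, multiplicativity gives $a(l_m)\ge n^{\pi_{s.c.}(l_{m-2})}$. Where you diverge is in manufacturing the contradicting upper bound. The paper fixes a large split prime $q$, uses Theorem~\ref{pp} with $A=p_1$ to place $l_{m-2}$ in $[q,p_1q)$, notes that the only ideal norm in that range divisible by $q$ is $q$ itself, and reads off $v_q\!\left(\prod_{i\le m-2}\Pi_K(l_i)\right)\le n(m-2)$ while $v_q\!\left(l_m^{\,a(l_m)}\right)\ge n^{|P_{s.c.}(q)|}$; it then bootstraps interval by interval via Theorem~\ref{pp}. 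You instead take logarithms globally and invoke Chebotarev-type asymptotics for $\pi_{s.c.}$ and for $\log\Pi_K$. The paper's valuation count is more elementary---it needs only the Bertrand-type Theorem~\ref{pp}, not the full prime ideal theorem---and feeds directly into the explicit constant worked out in Section~4; your argument is shorter to sketch but leans on heavier analytic input and is less immediately effective.

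One slip to fix: you claim $\theta_K(x):=\log\Pi_K(x)\sim x$ ``by the prime ideal theorem'', but $\log\Pi_K(x)=\sum_{\mathfrak{Na}\le x}\log\mathfrak{Na}$ runs over \emph{all} ideals, not just primes, and the ideal-counting theorem gives $\log\Pi_K(x)\sim c_K\,x\log x$. This costs only an extra factor of $\log l_{m-2}$ in your upper bound for $a(l_m)$, which is harmless against the exponential lower bound $n^{\pi_{s.c.}(l_{m-2})}$, so your argument still closes once this is corrected.
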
 

\begin{proof}[Proof of the main theorem]
Let $n=[K:\mathbf Q]$, $k=[K^{gal}:\mathbf Q]$ and let $D$ be the absolute value of the discriminant of $K^{gal}$.
We denote the minimum of the set $\{\mathfrak{Na}~|~\mathfrak a:\text{ideal of $\mathcal{O}_K$}\}\cap\mathbf Z_{>1}$ by $p_1$. From Theorem \ref{pp},  there exists $c_{p_1}$ such that there is a prime splitting completely in $(x, p_1x]$ for $x\ge \exp(c_{p_1}k(\log D)^2)$. 
Let $P_{s.c.}(x)$ be the set of all primes $p$ splitting completely in $K$ with $p\le x$. Let $q$ be a prime splitting completely such that $q\ge \exp(c_{p_1}k(\log D)^2)$ and $n^{|P_{s.c.}(q)|}>n(m-2)$.
For $q\le l_{m-2}<p_1q$, if {the} $m$-tuple $(l_1,\ldots,l_m)$ is trivial then from Lemma \ref{equi} we obtain the following prime factorization of $\Pi_K(l_1)\cdots\Pi_K(l_{m-2})$: \[\Pi_K(l_1)\cdots\Pi_K(l_{m-2})=\l(q^{r_q}\prod_{p\not=q}p^{r_{p}}\r)^{\prod_{p}a(p^{r_p})},\]
where $r_p\ge0$ for all $p$ and $r_q\ge1$.

Since $r_q\ge1$ and $a(p^{r_p})\ge n$ for all $p\in P_{s.c.}(q)$, {the product $\Pi_K(l_1)\cdots\Pi_K(l_{m-2})$ of factorial functions} needs to be {divisible} by at least $q^{n^{|P_{s.c.}(q)|}}$. 
The second smallest $q$-factor appears at $p_1q$, so this product $\Pi_K(l_1)\cdots\Pi_K(l_{m-2})$ is {divisible} by $q^{n(m-2)}$ at most for $q\le l_{m-2}<p_1q$. From the assumption $n(m-2)<n^{|P_{s.c.}(q)|}$, {the} $m$-tuple $(l_1,\ldots,l_m)$ is not trivial for all $q\le l_{m-2}<p_1q$. 
On the other hand, from Theorem \ref{pp} there exists a new prime {$q_1$ splitting completely} with $q<q_1\le p_1q$. Also, $q_1$ satisfies the conditions $q_1\ge \exp(c_{p_1}k(\log D)^2)$ and $n^{|P_{s.c.}(q_1)|}>n(m-2)$. {By the same argument as above}, there exists no trivial solutions $(l_1,\ldots,l_m)$ for all $q_1\le l_{m-2}<p_1q_1$ and there exists a new prime splitting completely $q_2$ with $q_1<q_2\le p_1q_1$.

By induction, there exists no trivial solution $(l_1,\ldots,l_m)$ for $l_{m-2}\ge q$. 
This shows the {required} finiteness.
\end{proof}
\setcounter{section}{3}

\section{{An} upper bound for trivial solutions}
Our main theorem implies the finiteness of trivial solutions for any $K\not= \mathbf Q$. Therefore, it is a new problem to find all trivial solutions of equation (\ref{deq}). We know that the constant $c(A)$ in Theorem \ref{pp} is effective, so one can give an explicit upper bound for $l_{m-2}$.

Since the constant $c(A)$ depends on $c_1$ and $c_2$, we can calculate $c(A)$ explicitly by the proof of Theorem \ref{pp}. Winckler obtained $c_1\le7.84 \times 10^{14}$ and $c_2=\f1{99}$. He actually obtained the estimate in Lemma \ref{hh} in a more concrete form. For the details for his results and computations, one can see his Ph.D. thesis \cite{wi13}.

In the following, we put $c_1=7.84\times10^{14}$ and $c_2=\f1{99}$ and calculate an upper bound for trivial solutions. Now we assume that there exists an exceptional zero $\beta$ of $\zeta_{K^{gal}}$. From the proof of Theorem \ref{main}, it suffices to calculate $c(2)$. From the proof of Theorem \ref{pp}, we need to obtain a constant $c(2)$ such that for $c>c(2)$
\be\label{suff}2\f{\beta-(2x)^{\beta-1}}{\beta-x^{\beta-1}}\f{\log x}{\log 2x}>1+\f{\beta ck^2(\log D)^2}{\beta-x^{\beta-1}}D^{-\f{\sqrt c}{99}},\ee
where $x=\exp\l(ck(\log D)^2\r)$. 
As we remarked in (\ref{star}), $1-\f1{4\log D}<\beta<1-c_3D^{-\f1k}<1$. Also it holds that for $x>1$ \[\f{\beta-(2x)^{\beta-1}}{\beta-x^{\beta-1}}>1.\]
Therefore, it suffices to show 
\be\label{ineq}\f{2ck(\log D)^2}{\log 2+ck(\log D)^2}>1+\f{4c_1ck^2(1-c_3D^{-\f1k})(\log D)^2}{1-c_3D^{-\f1k}-\exp\l(-c_3ck(\log D)^2D^{-\f1k}\r)}D^{-\f{\sqrt c}{99}}.\ee
Since $k\ge2$ and $D\ge3$, we get that the left-hand side of this inequality is greater than \[\f{4c(\log 3)^2}{\log 2+2c(\log 3)^2}\] and the right-hand side of this inequality (\ref{ineq}) is less than
\be\label{koko}
1+\f{4c_1ck^2(\log D)^2}{D^{\f1k}-c_3-D^{\f1k}\exp\l(-c_3ck(\log D)^2D^{-\f1k}\r)}D^{\f1k-\f{\sqrt c}{99}}.\ee
In the following, we consider the upper bound of (\ref{koko}).
First, we consider the numerator of (\ref{koko}). Inequality (\ref{min}) leads that
\[4c_1c\f{k^2}{D^2}\f{(\log D)^2}{D}D^{3+\f1k-\f{\sqrt c}{99}}<4c_1cD^{\f72-\f{\sqrt c}{99}}\f{64}{\pi^4}.\]
Next, we estimate the denominator.
Since $1-\f1{4\log D}<1-c_3D^{-\f1k}$ and $3\le D$, we have $c_3<\f14D^{\f1k}$. If $cc_3k(\log D)^2D^{-\f1k}\ge\log 3$, the denominator of (\ref{koko})
\begin{align*}
D^{\f1k}-c_3-D^{\f1k}\exp\l(-c_3ck(\log D)^2D^{-\f1k}\r)&\ge\f23D^{\f1k}-c_3\\
&>\f5{12}D^{\f1k}\\
&>\f5{12}.
\end{align*}
One can check that for $0<x\le\log 3$ the inequality $e^{-x}\le1-\f{2}{3\log3}x$ holds.
Therefore, if $c_3ck(\log D)^2D^{-\f1k}\le\log 3$ then
\begin{align*}
D^{\f1k}-c_3-D^{\f1k}\exp\l(-c_3ck(\log D)^2D^{-\f1k}\r)&\ge\f{2}{3\log3}c_3ck(\log D)^2-c_3\\
&\ge\l(\f{4}{3}c\log 3-1\r)c_3.\\
\inter{If we assume $c>1+c_3^{-1}$, we obtain}
D^{\f1k}-c_3-D^{\f1k}\exp\l(-c_3ck(\log D)^2D^{-\f1k}\r)&>1.
\end{align*}
Therefore, the goal is to find a constant $c(2)$ such that for $c>c(2)$  
\[\f{4c(\log 3)^2}{\log 2+2c(\log 3)^2}>1+\f{48}{5}c_1cD^{\f72-\f{\sqrt c}{99}}\f{64}{\pi^4}.\]
We find that it suffices to choose $c(2)=\max\{2.65\times10^7,1+c_3^{-1}\}$.
In the case that there does not exist an exceptional zero $\beta$ of $\zeta_{K^{gal}}$, we consider the inequality \[\f{2\log x}{\log 2x}>1+4c_1 ck^2(\log D)^2D^{-\f{\sqrt c}{99}}.\]
One can check that inequality (\ref{suff}) leads to this inequality. Thus we obtain $c(2)=\max\{2.65\times10^7,1+c_3^{-1}\}$.

From the proof of Theorem \ref{main}, we conclude that when $K \not=\mathbf Q$ is a number field, there exists no trivial solutions of $\Pi_K(l_1)\cdots\Pi_K(l_{m-1})=\Pi_K(l_m)$ for $l_{m-2}> \exp(c(2)k(\log D)^2)$, where $c(2)=\max\{2.65\times10^7,1+c_3^{-1}\}$. It is considered that $c_3=\f\pi6$ will suffice \cite{st74} thus we suggest that we can choose $c(2)=2.65\times10^7$.

\section*{Acknowledgement}
The author deeply thanks Prof. Kohji Matsumoto and Prof. Tapas Chatterjee for their precious advices and fruitful discussions.


\end{document}